\newtheorem{theorem}{Theorem}[section]
\newtheorem{proposition}[theorem]{Proposition}
\theoremstyle{definition}
\theoremstyle{remark}
\numberwithin{equation}{section}
\newcommand{\R}{\mathbb{R}}
\newcommand{\N}{\mathbb{N}}
\newcommand{\Q}{\mathbb{Q}}
\newcommand{\X}{\mathrm{X}}
\newcommand{\Y}{\mathrm{Y}}
\renewcommand{\S}{\mathbf{S}}
\newcommand{\esssup}{\mathop{\rm ess\ sup}}
\begin{document}

\title[On isometric embeddings of some function spaces]{Note on order-isomorphic isometric embeddings of some recent function spaces}

\author{Jarno Talponen}
\address{University of Eastern Finland, Department of Physics and Mathematics, P.O.Box 111, FI-80101 Joensuu, Finland}
\email{talponen@iki.fi}

\keywords{Function space, Banach space, isometric embedding, order isomorphism, Banach lattice, ultrapower, varying exponent, $L^p$ space, 
differential equation}
\subjclass[2000]{46E30,  46B08,  46B04, 46B42, 46B45, 34-XX}
\date{\today}

\begin{abstract}
We investigate certain recently introduced ODE-determined varying exponent $L ^p$ spaces. It turns out that these spaces are finitely representable in a concrete universal varying exponent $\ell^p$ space. Moreover, this can be accomplished in a natural unified fashion. 
This leads to order-isomorphic isometric embeddings of all of the above $L^p$ spaces to an ultrapower of the above varying exponent $\ell^p$ space.
\end{abstract}

\maketitle

\section{Introduction}

In this note we study the local theory of some very recently introduced varying-exponent $L^p$ spaces.

It is well-known that the classical $L^p$ spaces are finitely representable in the respective $\ell^p$ space. Moreover, the relevant finite-dimensional isomorphisms witnessing the finite representability can be chosen in such a way that they preserve bands, very roughly speaking. For example, the finite-representability of Bochner $L^p (L^q)$ spaces in the corresponding double $\ell^p$ spaces has been recently studied, see \cite{turk} (cf. \cite{Le}, \cite{L}). Recall that there is a known connection between finite representability and ultraproducts, and, in fact, 
in the above mentioned paper the local theory of these spaces is investigated by means of ultraproducts.

It is reasonable to ask if an analogous finite representability result holds in the varying exponent case, i.e. for spaces $L^{p(\cdot)}$. 
Here we show that it \emph{does} for a pair of recent classes of varying-exponent $L^{p(\cdot)}$ and $\ell^{p(\cdot)}$ spaces. We also prove some results involving ultraproducts but it turns out that there are versions of these results which are actually not specific to the ultrapower methods. 

The varying-exponent $\ell^{p(\cdot)}$ space investigated here, with exponents 
\[p(1)=p_1,\ p(2)=p_2\ , \ldots \in [1,\infty],\] 
can be described naively as follows:
\[\ell^{p(\cdot)} = (\ldots (((\R \oplus_{p_1} \R) \oplus_{p_2} \R)\oplus_{p_3} \R)\oplus_{p_4} \ldots .\]
Here $\R$ denotes a $1$-dimensional Banach space and the construction of the above space in \cite{talponen_y} is rigorous.
  
There is a natural `continuous version' of the above space. The author introduced in 
\cite{talponen_x} a class a varying-exponent $L^p$ spaces whose norm $\|f\|$ is governed by an ordinary d
differential equation as follows: 
\begin{equation}\label{eq: mainDE}
\varphi_f (0)=0 ,\ \varphi_{f}' (t)=\frac{|f(t)|^{p(t)}}{p(t)}\varphi_{f}(t)^{1-p(t)}\quad \mathrm{for\ a.e.}\ t\in [0,1].
\end{equation}
Here $p\colon [0,1] \to [1,\infty)$ and $f\colon [0,1]\to \R$ are measurable functions and $\varphi_f$ is Carath\'eodory's weak solution which exists and is unique for an initial value $\varphi_f (0)=0^+$ (see the paper for details). In the case with $\esssup p <\infty$ the set 
\[\{f \in L^0 \colon \varphi_f \ \mathrm{solution\ exists},\ \varphi_f (1)<\infty\}\]
becomes a Banach lattice with the usual point-wise operations defined a.e. and the norm 
\[\|f\|:=\varphi_f (1) .\]
In the constant $p(\cdot)=p\in [1,\infty)$ case this construction reproduces the classical $L^p$ spaces.

This paper also illustrates the inner workings of the above recent classes of spaces.

\subsection{Preliminaries}

We refer to the monographs in the references and the survey \cite{Heinrich} for a suitable background information. 
Throughout we are assuming the familiarity with the papers \cite{talponen_y} and \cite{talponen_x} 
regarding the construction, notations and basic facts involving $\ell^{p(\cdot)}$ and $L^{p(\cdot)}$ spaces, respectively.

If $p\colon [0,1]\to [1,\infty)$ is any measurable function there is a natural Banach function space $L^{p(\cdot)}_0$ such that 
$p(\cdot)$ is, intuitively speaking, almost bounded on this space. The space can be defined as the completion
\[\overline{\bigcup_{\alpha<\infty} \{1_{p(t)<\alpha} f\colon f \in L^{p(\cdot)}\} } \subset L^{p(\cdot)}.\]
(The space on the right can be regarded as a metric space but it may be non-linear for some cases of $p(\cdot)$.)

The double varying-exponent $\ell^{p(\cdot)}$ spaces, i.e. $\ell^{p(\cdot)} (\ell^{s(\cdot)} )$ can be defined as follows. 
For infinite matrices $(x_{n,m})_{n,m\in \N} \subset \R^{\N\times \N}$ we define the values of the corresponding norms in $2$ phases.
First, we let $a_k  := \|x_{k,\cdot }\|_{\ell^{s(\cdot)}}$ for all $k\in\N$. Then we set 
\[\|(x_{n,m})\|_{\ell^{p(\cdot)} (\ell^{s(\cdot)} )} := \|(a_k )\|_{\ell^{p(\cdot)}}.\]
In both phases we exclude the matrices $(x_{n,m})$ producing infinite values. It is easy to see that this results in a Banach space 
and it is denoted by $\ell^{p(\cdot)} (\ell^{s(\cdot)} )$.

For $a,b\geq 0$ and $1\leq p < \infty$ we denote
\[a \boxplus_p b =(a^p + b^p )^{\frac{1}{p}}.\]

If $\mathcal{F}$ is a filter on $\N$ and $a_n ,a \in \R$, $n\in\N$, we denote by $\lim_{n,\mathcal{F}} a_n =a$ the fact that 
\[\forall \varepsilon >0 \colon \{n\in \N\colon |a_n - a| < \varepsilon\}\in \mathcal{F}.\]

Recall that a Banach space $\X$ is finitely representable in a Banach space $\Y$ if for each finite-dimensional subspace 
$E \subset \X$ and $\varepsilon>0$ there is a finite-dimensional subspace $F \subset \Y$ and a linear isomorphism 
$T\colon E \to F$ with $\|T\| \|T^{-1}\| < 1+ \varepsilon$.
 
Given a Banach space $\X$ we denote 
\[\mathcal{C}(\X)=\ell^\infty (\X) / c_0 (\X), \]
adopting the notation used for Calkin algebras.

\section{Results}

\subsection{Preparations: Banach lattices of ultraproducts}

Let $\X_n$ be a sequence of Banach lattices, each satisfying the property
\begin{equation}\label{eq: left_triangle}
||x|-|y||\leq |x-y|.
\end{equation}
Observe that this condition immediately guarantees that the absolute value mapping $x\mapsto |x|$ is non-expansive. 
We let 
\[\bigoplus_{n}  \X_n \quad (\ell^\infty \ \mathrm{sense})\]
be the $\ell^\infty$ direct sum of the spaces. Write $\X = \bigoplus_{n} \X_n $.
Suppose that $\mathcal{F}$ is a filter on $\N$, e.g. a Fr\'echet filter. Then 
we let 
\[N_\mathcal{F} =\{(x_n ) \in \X\colon \lim_{n,\mathcal{F}} \|x_n\|_{\X_n} =0\}\subset \X .\]

It is easy to check that this is a closed subspace. For example, if $\X_n = \R$ for all $n$, a $1$-dimensional Banach space, 
and $\mathcal{F}$ is the filter generated by cofinite subsets, then $ N_\mathcal{F} =c_0$.  

We may generate a vector lattice order $\preceq$ on the space $\X / N_\mathcal{F} $ from the condition
\[ \{n\colon x_n \leq_{\X_n } y_n \}\in \mathcal{F} \quad \Rightarrow \quad (x_n ) /\sim\  \preceq\  (y_n )/\sim  .\]
(We are not claiming reverse implication above. This is so for instance because the equivalence classes 
do not determine the corresponding sequences uniquely.)

An alternative approach is that we may define an absolute value $|\cdot |$ on $\X / N_\mathcal{F} $ by 
\[(x_n ) /\sim\ \mapsto (|x_n|_{\X_n})/\sim .\]

Indeed, this is well defined since the absolute values $|\cdot|_{\X_n}$ satisfy \eqref{eq: left_triangle}.
Then the condition $| x | =x$ characterizes a positive cone which can be used in recovering the order $\preceq$. 
It is not hard to verify that these separate constructions result in the vector lattice order.

\begin{proposition}
Let us retain the above notations and assume that the absolute values $|\cdot|_{\X_n}$ satisfy \eqref{eq: left_triangle}, respectively. 
Then $\X$ endowed with the partial order $\preceq$ is a Banach lattice whose absolute value coincides with the mapping $|\cdot|$.  
\end{proposition}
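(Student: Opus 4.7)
The plan is to verify each Banach lattice axiom for $(\X/N_\mathcal{F},\preceq,\|\cdot\|_{\X/N_\mathcal{F}})$ by reducing it to the corresponding coordinate-wise assertion in the individual $\X_n$ and then transferring through the quotient by means of \eqref{eq: left_triangle}. First I would check well-definedness of the absolute value $(x_n)/{\sim}\mapsto (|x_n|)/{\sim}$: applying \eqref{eq: left_triangle} coordinate-wise and then the lattice norm of $\X_n$ yields $\|\,|x_n|-|y_n|\,\|_{\X_n}\le \|x_n-y_n\|_{\X_n}$, so $(x_n)\sim(y_n)$ implies $(|x_n|)\sim(|y_n|)$. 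The algebraic identities $|\alpha x|=|\alpha||x|$, $|x+y|\preceq|x|+|y|$ and the equivalence $|x|=0\iff x=0$ then pass coordinate-wise to the quotient (the last using $\|\,|x_n|\,\|_{\X_n}=\|x_n\|_{\X_n}$, which forces $(x_n)\in N_\mathcal{F}$ as soon as $(|x_n|)\in N_\mathcal{F}$).

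Next I would introduce the cone $C:=\{u\in\X/N_\mathcal{F} : |u|=u\}$, the key observation being that $u\in C$ iff $u$ admits a representative $(u_n)$ with $u_n\ge 0$ in $\X_n$ for every $n$: given $(u_n)$ with $|u|=u$, the sequence $(|u_n|)$ is such a representative. Closure of $C$ under addition and positive scalar multiplication, together with the pointed condition $C\cap(-C)=\{0\}$, is then immediate from the corresponding properties in each $\X_n$. Moreover, the order defined by $C$ agrees with the one generated by the coordinate-wise $\mathcal{F}$-condition in the statement, and the Riesz identities
\[u\vee v=\tfrac{1}{2}(u+v+|u-v|),\qquad u\wedge v=\tfrac{1}{2}(u+v-|u-v|)\]
in each $\X_n$ descend through the quotient, so $(\X/N_\mathcal{F},\preceq)$ becomes a vector lattice whose absolute value coincides with the map $|\cdot|$ from the first step.

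The main obstacle is verifying the lattice norm inequality $|x|\preceq|y|\Rightarrow \|x\|_{\X/N_\mathcal{F}}\le \|y\|_{\X/N_\mathcal{F}}$. Given representatives $(x_n),(y_n)$ and a coordinate-wise nonnegative representative $(w_n)$ of $|y|-|x|\in C$, one has $\|\,|y_n|-|x_n|-w_n\,\|_{\X_n}\to 0$ along $\mathcal{F}$, and the lattice norm of $\X_n$ gives
\[\|x_n\|_{\X_n}=\|\,|x_n|\,\|_{\X_n}\le \|\,|x_n|+w_n\,\|_{\X_n}\le \|\,|y_n|\,\|_{\X_n}+o(1)=\|y_n\|_{\X_n}+o(1)\]
along $\mathcal{F}$. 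Combining this with the explicit description $\|[x]\|_{\X/N_\mathcal{F}}=\inf_{A\in\mathcal{F}}\sup_{n\in A}\|x_n\|_{\X_n}$ of the quotient norm then yields $\|x\|\le\|y\|$. Completeness is inherited because $N_\mathcal{F}$ is a closed subspace of the Banach space $\X$, so $(\X/N_\mathcal{F},\preceq)$ is indeed a Banach lattice with the desired absolute value.
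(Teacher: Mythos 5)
Your proposal is correct: the well-definedness of $|\cdot|$ via \eqref{eq: left_triangle}, the characterization of the positive cone by coordinate-wise nonnegative representatives, the descent of the Riesz identities, and the lattice-norm estimate using the formula $\|[x]\|=\inf_{A\in\mathcal{F}}\sup_{n\in A}\|x_n\|_{\X_n}$ all check out. The paper leaves this proposition unproved (it is stated with a bare \qed, the surrounding text merely sketching the two equivalent constructions of the order), and your argument fills in the details along exactly the lines the paper indicates.
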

\qed

We denote by $\mathcal{U}$ a free ultrafilter on the natural numbers.
Recall that the ultrapower of a Banach space $\Y$ is defined as 
\[\Y^\mathcal{U} = \ell^\infty (\Y) / N_\mathcal{U}.\]

\subsection{Order isomorphic isometric embeddings}

Let $r\colon \N \to \Q \cap [1,\infty)$ be a bijection, i.e. an enumeration of the rationals $q\geq 1$. 
Denote by $\ell^{r(\cdot)}_0 = [(e_n)] \subset \ell^{r(\cdot)}$. It is known that $\ell^{r(\cdot)}$ (resp. $\ell^{r(\cdot)}_0$) contains almost isometrically 
all the spaces of the type $\ell^{q(\cdot)}$ (resp. $\ell^{q(\cdot)}_0$), in particular the spaces $\ell^p$ (resp. $\ell^p$ for $p<\infty$), see \cite{talponen_y}.

\begin{theorem}
Let $r(\cdot)$ be as above.
The space $\mathcal{C}(\ell^{r(\cdot)}_0 )$ is universal for spaces of the type $L^{p(\cdot)}_0 [0,1]$. More precisely, the latter spaces considered with their a.e. point-wise order, can be mapped by a linear order-preserving isometry into $\mathcal{C} (\ell^{r(\cdot)}_0 ) $, endowed with the Banach lattice order $\preceq$, as described above. Moreover, the same conclusion holds if we consider the ultrapower $(\ell^{r(\cdot)}_0 )^\mathcal{U}$ in place of 
$\mathcal{C} (\ell^{r(\cdot)}_0 ) $ for any free ultrafilter $\mathcal{U}$ on $\N$.
\end{theorem}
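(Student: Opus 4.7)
I would attack the statement in three stages. First, establish a quantitative finite representability with order structure: every finite-dimensional $E \subset L^{p(\cdot)}_0[0,1]$ admits, for each $\varepsilon>0$, a linear $(1+\varepsilon)$-isomorphism onto its image in $\ell^{r(\cdot)}_0$ that also preserves the a.e.\ pointwise order. Second, diagonalize the local embeddings into a single linear isometry, valued either in $\mathcal{C}(\ell^{r(\cdot)}_0)$ or in $(\ell^{r(\cdot)}_0)^{\mathcal{U}}$, using that the norms on these spaces are respectively $\limsup_n \|\cdot\|$ and $\lim_{\mathcal{U}}\|\cdot\|$ of fiberwise norms. Third, verify that the order $\preceq$ of the target (as defined before the proposition) is inherited by the limit embedding.

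\textbf{Explicit norm of step data.} For a partition $0 = t_0 < t_1 < \cdots < t_n = 1$ and an exponent $q(\cdot)\colon[0,1]\to\Q\cap[1,\infty)$ equal to $q_i$ on $[t_{i-1},t_i)$, the ODE \eqref{eq: mainDE} reduces on each subinterval to $(\varphi_f^{q_i})' = |f|^{q_i}$. For a step function $f = \sum_{i=1}^n c_i \1_{[t_{i-1},t_i)}$ this gives the recursion $\varphi_f(t_i)^{q_i} = \varphi_f(t_{i-1})^{q_i} + |c_i|^{q_i}(t_i - t_{i-1})$, and hence, with $v_i := |c_i|(t_i - t_{i-1})^{1/q_i}$,
\[\|f\|_{L^{q(\cdot)}_0} = \varphi_f(1) = v_1 \boxplus_{q_2} v_2 \boxplus_{q_3} \cdots \boxplus_{q_n} v_n.\]
This is exactly the norm of $(v_1,\dots,v_n)$ in an $n$-dimensional $\ell^{q(\cdot)}$-type space with exponent sequence $q_2,q_3,\dots,q_n$. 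By the universality of $\ell^{r(\cdot)}_0$ for spaces of type $\ell^{q(\cdot)}_0$ recalled from \cite{talponen_y}, such a finite-dimensional space embeds almost isometrically into $\ell^{r(\cdot)}_0$ via the natural basis map, which is coordinatewise and hence automatically order-preserving. Joint approximation of a general $f\in L^{p(\cdot)}_0$ by step functions and of $p(\cdot)$ by rational step exponents, combined with continuous dependence of the ODE-solution from \cite{talponen_x}, then upgrades this to the full finite representability claim with order structure.

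\textbf{Gluing.} Let $\{f_k\}_{k\in\N}$ be dense in $L^{p(\cdot)}_0$. For each $n$, build a linear order-preserving $(1+1/n)$-isometry $T_n\colon\span(f_1,\ldots,f_n)\to\ell^{r(\cdot)}_0$ as above, and extend it linearly to all of $L^{p(\cdot)}_0$ by composing with any bounded linear projection onto its finite-dimensional domain. Define $\Phi(f):=[(T_n(f))_{n\in\N}]$, interpreted either in $\mathcal{C}(\ell^{r(\cdot)}_0)$ or in $(\ell^{r(\cdot)}_0)^{\mathcal{U}}$. This $\Phi$ is linear, and for each $f_k$ one has $\|T_n(f_k)\| \to \|f_k\|$: once $n \geq k$, the projection is the identity at $f_k$, so $T_n$ acts on $f_k$ as a $(1+1/n)$-isometry. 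Hence $\|\Phi(f_k)\|$, computed as $\limsup_n \|T_n(f_k)\|$ (resp.\ $\lim_{\mathcal{U}}\|T_n(f_k)\|$), equals $\|f_k\|$, and by density $\Phi$ is a linear isometry on the whole of $L^{p(\cdot)}_0$. For order preservation: if $f_j \leq f_k$ a.e., then $T_n(f_j) \leq T_n(f_k)$ coordinatewise in $\ell^{r(\cdot)}_0$ for every $n \geq \max(j,k)$, so by the sufficient condition for $\preceq$ given before the proposition, $\Phi(f_j)\preceq\Phi(f_k)$; density and continuity of the lattice operations extend this to arbitrary $f \leq g$ in $L^{p(\cdot)}_0$.

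\textbf{Main obstacle.} The technical heart is the joint approximation hidden inside the second paragraph: one needs continuous dependence of the Carath\'eodory solution of \eqref{eq: mainDE} on the data $(f,p(\cdot))$ under $L^{p(\cdot)}_0$-perturbations of $f$ and $L^\infty$-perturbations of the exponent. The very definition of $L^{p(\cdot)}_0$ as the closure of its bounded-exponent truncations is exactly the mechanism that keeps $p(\cdot)$ effectively bounded on each relevant approximant, thereby taming the right-hand side of \eqref{eq: mainDE}. Once this approximation lemma is in hand, the algebraic identification of the step-function norm with the iterated $\boxplus_{q_i}$-sum, combined with the standard passage-to-limit in the Calkin quotient and the ultrapower, closes the argument in a uniform fashion for both target spaces.
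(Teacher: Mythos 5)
Your architecture is recognizably the same as the paper's: reduce to step data over a partition, observe that the ODE \eqref{eq: mainDE} with a piecewise-constant exponent integrates to $\varphi_f(1)=v_1\boxplus_{q_2}\cdots\boxplus_{q_n}v_n$ with $v_i=|c_i|(t_i-t_{i-1})^{1/q_i}$ (this is exactly the paper's map $B_n$), push that finite-dimensional $\ell^{\{q_2,\dots,q_n\}}$ piece into $\ell^{r(\cdot)}_0$ by a coordinatewise choice of nearby rational exponents, and glue the approximants through the quotient $\mathcal{C}(\ell^{r(\cdot)}_0)$ or the ultrapower. The organizational difference (you diagonalize over finite-dimensional subspaces of a dense sequence; the paper builds one global sequence of non-linear order-preserving maps via truncations and conditional expectations and checks linearity only in the limit) is legitimate, and your compactness of the unit ball of $\span(f_1,\dots,f_n)$ can indeed replace the paper's truncation operators $T_n$. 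Two smaller cautions: the composition with ``any bounded linear projection'' does not give $(T_n(f))_n\in\ell^\infty(\ell^{r(\cdot)}_0)$ for arbitrary $f$ since those projections need not be uniformly bounded, so you must define $\Phi$ on the dense linear span and extend by continuity; and the density argument for order preservation needs positive elements to be approximable by positive elements of that span (take the $f_k$ to be simple functions, or invoke closedness of the positive cone of the quotient lattice).

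The genuine gap is the sentence ``Joint approximation of a general $f\in L^{p(\cdot)}_0$ by step functions and of $p(\cdot)$ by rational step exponents, combined with continuous dependence of the ODE-solution from \cite{talponen_x}, then upgrades this to the full finite representability claim.'' No continuous-dependence result of the required form is available off the shelf: $p(\cdot)$ is merely measurable, and perturbing the exponent in \eqref{eq: mainDE} is a singular perturbation of the right-hand side near $\varphi=0$ and near $p=1$. This is precisely where the paper does its real work: Lusin's theorem produces compacta $C_n$ on which $p$ is uniformly continuous, absolute continuity of $\varphi_f$ gives $\|f\|=\sup_n\|\1_{C_n}f\|$, martingale convergence controls the conditional expectations onto the refining finite algebras, and the explicit uniform limit \eqref{eq: converg} is what lets one replace $p(\cdot)$ by the sandwiching constants $p_i\le p(\cdot)\le q_i$ with only an $\varepsilon$-loss in the derivative of the norm functional. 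Without an argument at this level of detail your first stage — the $(1+\varepsilon)$ order-isomorphic embedding of an arbitrary finite-dimensional $E\subset L^{p(\cdot)}_0$ — is asserted rather than proved, and it is the heart of the theorem; the step-function computation and the gluing are the easy parts. To repair the proposal you would need to import essentially all of the paper's Step 1 (or prove an equivalent quantitative approximation lemma for the Carath\'eodory solutions) before the diagonalization can start.
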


\newcommand{\E}{\mathbb{E}}
\begin{proof}
We prove the latter statement involving the ultrapower which is more abstract (if not more complicated). Let $p\colon [0,1]\to [1,\infty)$ 
be a measurable function and $\mathcal{U}$ a free ultrafilter on $\N$. 

Note that $Y \oplus_p L^p (\mu)$ can be written isometrically as $(Y \oplus_p L^p (\mu_1 ) )  \oplus_p L^p (\mu_2 )$ where $\mu = \mu_1 + \mu_2$ is a decomposition such that $\max \mathrm{supp}\ \mu_1 \leq \min \mathrm{supp}\ \mu_2$.

According to Lusin's theorem there is a sequence of compact sets $C_n \subset [0,1]$ such that $p|_{C_n}$ are uniformly continuous for each $n$ and 
$m(C_n ) \to 1$. Since the norm-defining solutions $\varphi_{f}$ are assumed to be absolutely continuous and taking into account the basic properties of the solutions (see \cite{talponen_x}), we may identify 
\[\|f\|_{L^{p(\cdot)}} =\sup_n \|1_{C_n} f\|_{L^{p(\cdot)}},\quad f\in L^{p(\cdot)}_0 .\]
Indeed, let us recall the justification for this. It was proved in \cite{talponen_x} that $\|\cdot \|_{L^{p(\cdot)}}$ is a lattice norm and moreover that $\varphi_f \leq \varphi_g$ point-wise if $|f|\leq |g|$ point-wise a.e.
Since $\esssup p(\cdot) <\infty$, it is known that since $f \in L^{p(\cdot)}$ then also 
$1_{C_n} f \in L^{p(\cdot)}$, and $\varphi_{1_{C_n} f} \leq \varphi_{f}$ point-wise, see \cite{talponen_y}. 
Then, inspecting the governing differential equation \eqref{eq: mainDE}, we get immediately that 
\begin{equation}\label{eq: major}
\varphi_{1_{C_n} f}' \geq \varphi_{f}' 
\end{equation}
a.e. on $C_n$ and of course $\varphi_{1_{C_n} f}' =0$ a.e. in the 
complement of $C_n$. On the other hand, the solution $\varphi_f$, by its definition, is absolutely continuous
which implies 
\[\int_{[0,1]\setminus C_n} \varphi_{f}' (t)\ dt \to 0\]
as $n\to\infty$. Thus, using \eqref{eq: major} we get  
\[\sup_{n} \|1_{C_n} f \|_{L^{p(\cdot)}} = \sup_{n} \int_{[0,1]} \varphi_{1_{C_n} f}' (t)\ dt \geq 
\int_{[0,1]} \varphi_{f} ' (t)\ dt = \|f \|_{L^{p(\cdot)}} .\]
Since $\|1_{C_n} f \|_{L^{p(\cdot)}} \leq  \|f \|_{L^{p(\cdot)}}$ for all $n \in \N$, we observe that the above 
inequality becomes equality.

{\noindent \bf Step 1: Approximation of the norm by simple seminorms.}
First we assume that $f \in L^\infty$. This makes sense because it was shown in \cite{talponen_x} that $L^\infty$ is dense in $L^{p(\cdot)}$ in the case where $\esssup p < \infty$.

Consider simple semi-norms (as in \cite{talponen_x}), 
\[|f|_{N} := |f|_{(L^{p_1}(\mu_1 ) \oplus_{q_2} L^{p_2}(\mu_2 ) )\oplus_{q_3} L^{p_3}(\mu_3 ) )
\oplus_{q_4} \ldots ) \oplus_{q_n} L^{p_n}(\mu_n ) }.\]

Here the measures $\mu_i \colon \Sigma_i \to [0,1]$ are obtained as restrictions of the Lebesgue measure to compact subsets $I_i \subset [0,1]$ where $\max I_i \leq \min I_{i+1}$. Thus 
$\Sigma_i = \{A \cap I_i \colon A\in \Sigma\}$ where $\Sigma$ is the $\sigma$-algebra of the completed
Lebesgue measure on the unit interval and $\mu_i (B)=m(B)$ for all $B \in \Sigma_i$.

Let $(N_n)$ be a sequence of  such semi-norms with $p_i \leq p(\cdot) \leq q_i$ on $\mathrm{supp}(\mu_i )$. Then by the construction of the $\|\cdot \|_{L^{p(\cdot)}}$ norm we have that
$|f|_{N_n} \leq \|f\|_{L^{p(\cdot)}}$ for each $n$ and $f\in L^{p(\cdot)}_0$. Indeed, this is due to the fact that $\|f\|_{L^{p(\cdot)}}$ is essentially defined as a supremum of such seminorms.

By a diagonal argument we may choose $N_n$ in such a way that 
\[\lim_{n\to\infty} N_n (1_{C_m} f) = \|1_{C_m} f\|_{L^{p(\cdot)}}\]
for each $f \in L^\infty$ and $m\in\N$.  Indeed, since $p$ is bounded and uniformly continuous on $C_m$ we may find for each $\varepsilon>0$ numbers $0=a_1 < a_2 < \ldots <a_j =1$
such that 
\begin{enumerate}
\item{The corresponding supports for measures satisfy $\mathrm{supp} (\mu_i ) \subset [a_i , a_{i+1}]$;} 
\item{$q_i = \sup_{C_m \cap [a_i , a_{i+1}]}  p(\cdot)$, $q_i = \sup_{C_m \cap [a_i , a_{i+1}]}  p(\cdot)$;}
\item{Intuitively, the differences $q_i - p_i$ are negligibly small;} 
\item{
\[\frac{d}{dt} |1_{C_m \cap [0,t]} f|_N \geq \varphi_f ` (t) - \varepsilon \]
a.e. on $C_m$ for $f$ such that $\sup_t |f(t)|=1$. }
\end{enumerate}
This is due to the fact that 
\begin{equation}\label{eq: converg}
\frac{d}{dt} \left(A^{q_i} + (t|x|^{p_i})^\frac{q_i}{p_i} \right)^\frac{1}{q_i} \bigg|_{t=0} \to \frac{|x|^{p}}{p} A^{1-p}
\end{equation}
uniformly for $0 \leq A, |x|\leq 1$ as $p_i \nearrow p$, $q_i \searrow p$. The diagonal argument is then applied to choose 
the sequence of seminorms $N_n$ as to eventually cover all cases $\varepsilon = \frac{1}{m}$, $C_m$ and $|f|\leq m$ for all $m\in\N$.  

For each $k\in\N$ let $\mathcal{F}_k$ be the finite $\sigma$-algebra generated by all the supports of 
$\mu_i$ corresponding to $N_n$ for $1\leq n \leq k$. Without loss of generality we may assume 
by adding suitable finitely many sets (e.g. dyadic decompositions of the unit interval) to each $\mathcal{F}_k$ that
\begin{equation}\label{eq:  diam}
\lim_{k\to\infty} \sup\{\mathrm{diam}(\Delta)\colon \Delta \in \mathcal{F}_k\ \mathrm{atom}\}=0.
\end{equation}
and that $\bigcup_k \mathcal{F}_k$ $\sigma$-generates the Borel $\sigma$-algebra on the unit interval.
By an atom of an algebra of sets $\mathcal{F}$ we mean $\Delta \in \mathcal{F}$  such that if $\Delta'  \in \mathcal{F}$, $\Delta' \subset \Delta$, then 
$\Delta' = \Delta$ or $\Delta' = \emptyset$.

Next we study the conditional expectation operators 
$\E (f \ |\  C, \mathcal{F}_k )$. Here 
\[\E (f\ |\ C, \mathcal{F}) = \sum_{\Delta } A_{\Delta\cap C}^* (f) 1_{\Delta\cap C}\] 
where $A_{\Delta\cap C}^* \in L^\infty$ is considered as the average integral (operator) over $\Delta \cap C$ where $\Delta\in  \mathcal{F}$
are atoms with respect to the finite algebra of sets $\mathcal{F}$. We use the convention that $A_{\Delta\cap C}^* (f)=0$ whenever $m(\Delta\cap C)=0$.

Restricting $f$ to the support of any of the $\mu_i$:s, it follows from the 
martingale convergence principle that $\E (f\ |\ C, \mathcal{F}_k ) \to 1_C f$ almost everywhere as $k\to \infty$ and 
also in the $L^{p_i}(\mu_i )$-sense, see e.g. \cite[Ch. 5.4.]{durret}. Consequently, putting the pieces together, we obtain that
\begin{equation}\label{eq: Econv}
|\E (f\ |\ C_m ,\mathcal{F}_k ) |_{N_n} \to |1_{C_m} f |_{N_n },\quad \mathrm{as}\ k\to\infty,\quad 
\forall n,m\in\N,\ f\in L^\infty .
\end{equation}
  
Define versions $N_{n}'$ of the semi-norms $N_n$ by replacing $q_i$ with $p_i$. 
By the uniform continuity of $p$ on the sets $C_m$, \eqref{eq: converg}, \eqref{eq:  diam} and \eqref{eq: Econv}
we may choose subsequences $(m_n ), (k_n) \subset \N$ with $m_n , k_n \to \infty$ as $n\to\infty$ such that
\[\lim_{n\to\infty} |\E (f | C_{m_n},\mathcal{F}_{k_n} ) |_{N_{n}'} = \|f\|_{L^{p(\cdot)}},\quad f\in \S_{L^{\infty}},\]
where $n\mapsto m_n$ need not be strictly increasing. In fact, the above equality clearly holds for any $f \in L^{\infty}$.

{\noindent \bf Step 2: Approximation of the required operator by tame non-linear operators.}

Consider a sequence $0\leq \alpha_n \nearrow \infty$ and non-linear operators $T_n \colon L^{p(\cdot)}_0 [0,1] \to L^{p(\cdot)}_0 [0,1]$ 
given by $T_n (f)[t]=\min(\alpha_n , \max(-\alpha_n , f(t)))$ for a.e. $t$. Thus the following condition holds:
\begin{enumerate}
\item[(a)] $\|f-T_n f\|_{L^{p(\cdot)}} \to 0$ as $n\to \infty$ for each $f\in L^{p(\cdot)}_0$.
\end{enumerate}
Indeed, $L^\infty \subset L^{p(\cdot)}$ is dense whenever $\esssup p <\infty$ and consequently 
it follows from the definition of the space $L^{p(\cdot)}_0$ that $L^\infty$ is dense in it as well.
We may additionally choose the above sequences of $\alpha$:s, conditional expectation operators and the semi-norms in such a way that 
\begin{enumerate}
\item[(b)] $\lim_{n\to\infty} \sup_{f\in L^{p(\cdot)}_0} |\ |\E(T_n (f) \ |\ C_{m_n} ,  \mathcal{F}_{k_n}) |_{N_{n}'} - \|1_{C_{m_n}} f \|_{L^{p(\cdot)}}\ | =0$.
\end{enumerate}
This can be established by using the uniform continuity of $p$ on the compact sets $C_m$, using the fact that in such a case the 
simple semi-norms converge uniformly and invoking the martingale $L^p$-convergence fact above.
Note that $T_n$:s are order-preserving although they are non-linear.

Next we analyze the simple semi-norms chosen and in particular the exponents $p_i$:s.  We obtain that for each $n$ for the exponents $p_{2}^{(n)}, p_{3}^{(n)}, \ldots , p_{j}^{(n)}$ corresponding to $N_{n}'$ there are 
$i_2 < i_3 < \ldots <i_j$ with 
rational exponents $r_{i_2}^{(n)}, r_{i_3}^{(n)} , \ldots , r_{i_j}^{(n)}$ very close to the corresponding $p$:s. Indeed, 
by repeating the almost isometric embedding construction in \cite{talponen_y} we may pick the $r_{i_k}$:s in such a way that 
\begin{equation}\label{eq: quasi}
\|\cdot\|_{\ell^{\left\{ r_{i_2}^{(n)}, r_{i_3}^{(n)} , \ldots , r_{i_j}^{(n)}\right\}}} \leq \|\cdot\|_{\ell^{\left\{ p_{2}^{(n)}, p_{3}^{(n)}, \ldots , p_{j}^{(n)}\right\}}}
\leq \frac{n+1}{n} \|\cdot\|_{\ell^{\left\{ r_{i_2}^{(n)}, r_{i_3}^{(n)} , \ldots , r_{i_j}^{(n)}\right\}}}.
\end{equation}
Here
\[\ell^{\left\{ q_{2}, q_{3}, \ldots , q_{j} \right\}} = (((\ldots (\R \oplus_{q_{2}} \R )\oplus_{q_3} \R)\oplus_{q_4} \ldots \oplus_{q_{j-1}} \R)\oplus_{q_j} \R ,\]  
that is, $\R^{j+1}$ with the norm 
\[\|(x_n )_{n=1}^{j+1} \|_{\ell^{\left\{ q_{2}, q_{3}, \ldots , q_{j}\right\}}}= (((\ldots (|x_1 | \boxplus_{p_2} |x_2 |) \boxplus_{p_3} |x_3|) \boxplus_{p_4} \ldots \ ) \boxplus_{p_j} |x_{j+1}|  .\]

We can find for each $n$ a finite-dimensional varying exponent $\ell^p$ space $\ell^{\left\{p_{2}^{(n)}, \ldots , p_{j}^{(n)}\right\}}$
and a natural linear order-preserving linear bijection 
\[B_n\colon \mathrm{Image}\ \E(\cdot \ |\ C_{m_n} ,  \mathcal{F}_{k_n}) \to  \ell^{\left\{p_{2}^{(n)}, \ldots , p_{j}^{(n)}\right\}},\] 
where $j$ is the finite dimension of the space of simple functions of the form $\E(f \ |\ C_{m_n} ,  \mathcal{F}_{k_n})$,
and such that 
\begin{equation}\label{eq: EfN}
|\E(f\ |\ C_{m_n} ,  \mathcal{F}_{k_n}) |_{N_{n}'} = \|B_n \E(f\ |\ C_{m_n} ,  \mathcal{F}_{k_n})\|_{\ell^{\left\{p_{2}^{(n)}, p_{j}^{(n)}\right\}}}, 
\end{equation}
$n\in\N$, $f\in L^{p(\cdot)}_0$.
Indeed, this applies the fact that $\mathcal{F}_{k_n}$ contains all the supports of $\mu_i$:s corresponding to $N_{n}'$ and we may write 
\[\E(f\ |\ C_{m_n} ,  \mathcal{F}_{k_n}) = \sum_{i=1}^j A_{\Delta_i}^* (f) 1_{\Delta_i} \]
and
\begin{multline*}
|\E(f\ |\ C_{m_n} ,  \mathcal{F}_{k_n}) |_{N_{n}'} = (\|A_{\Delta_1}^* (f) 1_{\Delta_1}\|_{L^{p_{1}^{(n)}}(\Delta_1 )} \boxplus_{p_{2}^{(n)}}  \|A_{\Delta_2}^* (f) 1_{\Delta_2}\|_{L^{p_{1}^{(n)}}(\Delta_2 )}) \boxplus_{p_{3}^{(n)}} \\ 
\ldots \ ) \boxplus_{p_{j}^{(n)}} \|A_{\Delta_j}^* (f) 1_{\Delta_j}\|_{L^{p_{j}^{(n)}}(\Delta_j )} 
\end{multline*}
where the subsets $\Delta_i \subset [0,1]$ are successive ($\sup \Delta_i \leq \inf  \Delta_{i+1}$) and are $\mathcal{F}_{k_n}$-atomic subsets of the supports of $\mu_i$:s corresponding to $N_{n}'$.
Recall that 
\[L^p (\Delta_i ) \oplus_p L^p (\Delta_{i+1})=L^p (\Delta_i \cup \Delta_{i+1})\] 
in a canonical way. The mapping $B_n$ is given by 
\[\E(f\ |\ C_{m_n} ,  \mathcal{F}_{k_n}) \mapsto \left(m(\Delta_1)^\frac{1}{p_1} A_{\Delta_1}^* (f),\ m(\Delta_2)^\frac{1}{p_2} A_{\Delta_2}^* (f),\ \ldots \ ,\ m(\Delta_j)^\frac{1}{p_j} A_{\Delta_j}^* (f) \right)\]
and it is easy to see that it is well-defined, linear and bijective.

Let $\iota_n \colon \ell^{\left\{ p_{2}^{(n)}, p_{3}^{(n)}, \ldots , p_{j}^{(n)}\right\}} \to \ell^{r(\cdot)}$ be a natural linear order-preserving mapping corresponding to the arrangement in \eqref{eq: quasi}. 
Next, we define a mapping $S\colon L^{p(\cdot)}_0 \to \ell^\infty (\ell^{r(\cdot)})$ as follows: $S(f)=(x_n)$ where 
\[x_n = \iota_n B_n \E(T_n (f)\ |\ C_{m_n} ,  \mathcal{F}_{k_n}).\]

Note that 
\begin{equation}\label{eq: Cconver}
\|1_{C_{m_n}} f \|_{L^{p(\cdot)}} \to \| f \|_{L^{p(\cdot)}},\quad n\to \infty 
\end{equation}
by the absolute continuity of the solutions $\varphi_f$, as observed above.

The mapping required in the statement is the induced mapping $\widehat{S}\colon f \mapsto S(f)+ N_\mathcal{F}$, mapping to the quotient space (in this case the ultrapower). This mapping is clearly order-preserving.

It is also norm-preserving, since $\lim_{n\to\infty} \|x_n\|_{  \ell^{r(\cdot)}}=\|f\|$. Indeed, this follows by using (b), \eqref{eq: quasi}, \eqref{eq: EfN}, \eqref{eq: Cconver} and the fact that 
$L^\infty$ is dense in $L_{0}^{p(\cdot)}$.

To verify the linearity of $\widehat{S}$, observe that for any $\varepsilon>0$ and $f,g \in L^{p(\cdot)}_0$ there are $f_0 , g_0 \in L^\infty$ such that $\max(\|f- f_0 \|_{L^{p(\cdot)}}, \|g- g_0 \|_{L^{p(\cdot)}})< \varepsilon$.
Thus, selecting $n\in \N$ in such a way that $\alpha_n \geq  \max (\|f\|_{L^\infty }, \|g\|_{L^\infty })$, we obtain that 
\[\max(\|f- T_n f \|_{L^{p(\cdot)}}, \|g- T_n g \|_{L^{p(\cdot)}}) \leq \max(\|f- f_0 \|_{L^{p(\cdot)}}, \|g- g_0 \|_{L^{p(\cdot)}})< \varepsilon .\]
Here we are using the fact that $L^{p(\cdot)}_0$ is a Banach lattice in its usual order (see \cite{talponen_x}). This means that 
\begin{multline*}
\|T_n (f+g) - (T_n f +T_n g)\|_{L^{p(\cdot)}}\leq \|T_n (f+g) - (f+g)\|_{L^{p(\cdot)}} \\
+ \|T_n f - f\|_{L^{p(\cdot)}} + \|T_n g +g\|_{L^{p(\cdot)}} \to 0
\end{multline*}
as $n\to\infty$. Recalling (b) and the construction of $S$, it follows that  
\[\|S_n (f+g) - (S_n (f) + S_n (g))\|_{\ell^{r(\cdot)}} \to 0,\quad n\to\infty.\]
This shows that $\widehat{S}(f+g)=\widehat{S}(f)+\widehat{S}(g)$ for all $f,g\in L^{p(\cdot)}_0$. The homogeneity of $\widehat{S}$ is seen similarly. This completes the proof.
\end{proof}

\begin{theorem}
Let $r(\cdot)$ be as above.
The space $\mathcal{C} (\ell^{r(\cdot)}(\ell^{r(\cdot)}))$ is universal for spaces of the type
$\ell^{q(\cdot)} (\ell^{s(\cdot)})$.  More precisely, the latter spaces considered with their matrix entry wise order can be mapped by a linear order-preserving isometry into 
$\mathcal{C} (\ell^{r(\cdot)}(\ell^{r(\cdot)}))$. Moreover, the same conclusion holds if we consider the ultrapower $(\ell^{r(\cdot)}(\ell^{r(\cdot)}) )^\mathcal{U}$ in place of 
$\mathcal{C} (\ell^{r(\cdot)}(\ell^{r(\cdot)}))$. 
In particular, each space $\ell^{q(\cdot)} (\ell^{s(\cdot)})$ is finitely representable in $\ell^{r(\cdot)}(\ell^{r(\cdot)})$.
\end{theorem}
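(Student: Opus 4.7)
My plan is to follow the architecture of the previous theorem but in a considerably lighter form: the discrete doubly-indexed setting eliminates the need for Lusin's theorem, conditional expectations, and the non-linear truncation operators used there. The role of all three is played by a single clean device, namely the natural coordinate projection onto the top-left $n\times n$ corner of the matrix.

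First I would set $P_n : \ell^{q(\cdot)}(\ell^{s(\cdot)}) \to \ell^{q(\cdot)}(\ell^{s(\cdot)})$ to be the entry-wise truncation defined by $(P_n x)_{i,j}=x_{i,j}$ for $i,j\leq n$ and $(P_n x)_{i,j}=0$ otherwise. Both constituent norms $\|\cdot\|_{\ell^{s(\cdot)}}$ and $\|\cdot\|_{\ell^{q(\cdot)}}$ are monotone in the coordinate-wise order because each $\boxplus_{p}$ is monotone in both variables, and together with the inductive-limit definition of these norms this gives $\|P_n x\| \nearrow \|x\|$ as $n \to \infty$ for every $x \in \ell^{q(\cdot)}(\ell^{s(\cdot)})$. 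The image of $P_n$ is the finite-dimensional space obtained by applying $(\boxplus_{s(2)},\ldots,\boxplus_{s(n)})$ within each of the first $n$ rows and then $(\boxplus_{q(2)},\ldots,\boxplus_{q(n)})$ to the resulting $n$ row-norms.

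Next, for each $n$ I would invoke the almost-isometric rational-approximation scheme of \cite{talponen_y} independently on the inner and outer factors. Pick strictly increasing indices $i_1<i_2<\cdots<i_n$ with $r(i_k)$ within $1/n$ of $s(k)$ and, separately, strictly increasing indices $j_1<j_2<\cdots<j_n$ with $r(j_k)$ within $1/n$ of $q(k)$; this is possible because $r$ enumerates $\Q\cap [1,\infty)$ as a bijection and every neighbourhood of a real number in $[1,\infty)$ meets infinitely many rationals. Define the row-embedding $\phi_n \colon \R^n \to \ell^{r(\cdot)}$ by sending the $k$-th coordinate into the $i_k$-th coordinate of $\ell^{r(\cdot)}$ with the scaling dictated by \eqref{eq: quasi}, apply $\phi_n$ identically to each of the first $n$ rows, and then use an analogous outer embedding $\psi_n$ to place the $k$-th row-image into the $j_k$-th coordinate of the outer $\ell^{r(\cdot)}$. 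This defines a linear, order-preserving map $T_n \colon \mathrm{Image}(P_n) \to \ell^{r(\cdot)}(\ell^{r(\cdot)})$, and two applications of \eqref{eq: quasi} yield the distortion bound $\|T_n\|\,\|T_n^{-1}\| \leq (1+1/n)^2$.

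Finally, I would set $S \colon \ell^{q(\cdot)}(\ell^{s(\cdot)}) \to \ell^\infty(\ell^{r(\cdot)}(\ell^{r(\cdot)}))$ by $S(x) = (T_n P_n x)_n$ and let $\widehat{S}$ be the induced map into $\mathcal{C}(\ell^{r(\cdot)}(\ell^{r(\cdot)}))$, or alternatively into $(\ell^{r(\cdot)}(\ell^{r(\cdot)}))^\mathcal{U}$ for any chosen free ultrafilter $\mathcal{U}$. Linearity and order-preservation of $\widehat{S}$ are immediate from those properties of each $T_n P_n$, while the isometry follows from $\|T_n P_n x\|_{\ell^{r(\cdot)}(\ell^{r(\cdot)})} \to \|x\|$, which combines $\|P_n x\|\to\|x\|$ with the distortion $(1+1/n)^2 \to 1$. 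Finite representability drops out at once by choosing $n$ large enough that $P_n|_E$ is $(1+\varepsilon/3)$-isomorphic on a given finite-dimensional $E$ (possible by pointwise convergence on a finite spanning set combined with compactness of the unit sphere of $E$) and that $(1+1/n)^2 < 1+\varepsilon/2$. The only place that requires genuine care is the combinatorial bookkeeping in the embedding step, namely the simultaneous selection of two strictly increasing index families inside the enumeration $r$; but since the two $\ell^{r(\cdot)}$ factors of $\ell^{r(\cdot)}(\ell^{r(\cdot)})$ are formally independent, the two index sets need not interact, which is what ultimately makes the double-exponent case fall to essentially the same argument as the single-exponent case.
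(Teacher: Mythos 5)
Your proposal is correct and follows essentially the same route as the paper's (sketched) proof: your $P_n$ is the paper's canonical projection $r_k$ onto the top-left corner, your $T_n$ built from two independent rational index selections is the paper's $\iota_k$ satisfying \eqref{eq: quasi}, and your $S(x)=(T_nP_nx)_n$ is exactly the operator the paper induces into the quotient. You merely spell out more of the details (the $(1+1/n)^2$ distortion bookkeeping and the finite-representability consequence) that the paper leaves implicit.
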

\begin{proof}
(Sketch.) Consider each element of $\ell^{q(\cdot)} (\ell^{s(\cdot)})$ as a sequence $(x_n )$ with $x_ n\in \ell^{s(\cdot)}$ for $n\in\N$. We may consider these elements as infinite matrices $(x_{n,m})_{n,m\in\N}$. 

Let $k \in \N$. By repeating inductively the observation involving \eqref{eq: quasi} we can find $n_1 ,n_2 , \ldots , n_k$ and $m_1 ,m_2 , \ldots , m_k$ such that the mapping 
$\iota_k \colon (x_{i,j}) \mapsto (y_{n_i , m_j})$, and setting other coordinates $y_{n,m}$ to $0$, satisfies 
\[\|r_k (x_{i,j})\|_{\ell^{q(\cdot)} (\ell^{s(\cdot)})} \leq \| \iota_k r_k (x_{i,j})\|_{\ell^{r(\cdot)}(\ell^{r(\cdot)})} \leq \frac{k+1}{k} \|r_k (x_{i,j})\|_{\ell^{q(\cdot)} (\ell^{s(\cdot)})} .\]
Here the domain of $\iota_k$ is $\{(1_{n,m\leq k}\ x_{n,m})\colon  x_{n,m}\in\R,\ n,m\in\N\}$ and we denote by $r_k \colon \ell^{q(\cdot)} (\ell^{s(\cdot)}) \to \ell^{q(\cdot)} (\ell^{s(\cdot)})$ the canonical projection to this domain.

The required linear isometry is induced by the operator 
\[S \colon  \ell^{q(\cdot)} (\ell^{s(\cdot)}) \to \ell^\infty (\ell^{r(\cdot)}(\ell^{r(\cdot)})) ,\quad (x_{n,m}) \mapsto (\iota_k r_k (x_{n,m}))_{k\in\N} .\]

\end{proof}

We note that the previous result holds also as a left-handed version, where we consider all the varying-exponent $\ell^p$-spaces formally as 
\[\R \oplus_{p_1} (\R \oplus_{p_2} (\R \oplus_{p_3} (\ldots\quad\quad \ldots ))\ldots ).\]

\subsection*{Acknowledgments}

This work has been financially supported  by V\"ais\"al\"a foundation's and the Finnish Cultural Foundation's research grants and Academy of Finland Project \# 268009.

\end{document}